\newtheorem{Thm}{Theorem}
\newtheorem{Coro}{Corollary}
\newtheorem{thm}{Theorem}[section]
\newtheorem{lm}[thm]{Lemma}
\newtheorem{prop}[thm]{Proposition}
\numberwithin{equation}{section}
\newcommand{\abs}[1]{\left\vert#1\right\vert}
\newcommand{\R}{\mathbb{R}}
\newcommand{\C}{\mathbb{C}}
\newcommand{\h}{\mathbb{H}}
\newcommand{\Q}{\mathbb{Q}}
\newcommand{\Z}{\mathbb{Z}}
\newcommand{\N}{\mathbb{N}}
\newcommand{\eps}{\varepsilon}
\newcommand{\im}{\mathfrak{Im}}
\newcommand{\re}{\mathfrak{Re}}
\newcommand{\bpm}{\begin{pmatrix}}
\newcommand{\epm}{\end{pmatrix}}
\newcommand{\bsm}{\left(\begin{smallmatrix}}
\newcommand{\esm}{\end{smallmatrix}\right)}
\newcommand{\sign}{\mathrm{sign}}
\newcommand{\vol}{\mathrm{vol}}
\newcommand{\SL}{\mathrm{SL}}
\newcommand{\PSL}{\mathrm{PSL}}
\newcommand{\res}{\mathrm{Res}}
\title{Generalized Dedekind sums and equidistribution mod 1}
\author{Claire Burrin}
\begin{document}
\baselineskip=18pt

\maketitle

\begin{abstract}
Dedekind sums are well-studied arithmetic sums, with values uniformly distributed on the unit interval. Based on their relation to certain modular forms, Dedekind sums may be defined as functions on the cusp set of $\SL(2,\Z)$. We present a compatible notion of Dedekind sums, which we name Dedekind symbols, for any non-cocompact lattice $\Gamma<\SL(2,\R)$, and prove the corresponding equidistribution mod 1 result. The latter part builds up on a paper of Vardi, who first connected exponential sums of Dedekind sums to Kloosterman sums.
\end{abstract}

\setcounter{tocdepth}{1}
\tableofcontents

\section{Introduction}

In this note, we introduce a function for non-cocompact lattices of $\SL(2,\R)$ that relates to, and actually generalizes, the classical Dedekind sums 
$$
s(a;c)\ =\ \sum_{n=1}^{c-1} \left(\left(\frac{n}{c}\right)\right)\left(\left(\frac{na}{c}\right)\right)\qquad (c\in\N, a\in\Z, (a,c)=1),
$$
where 
$$
x \mapsto \left(\left( x\right)\right)\ :=\ \begin{cases} \left\{x\right\} - \frac{1}{2} & x\not\in\Z \\ 0 & x \in\Z \end{cases}\qquad \ (\{ x\} = \text{ fractional part of } x\in\R)
$$
is the odd and periodic "sawtooth" function of expectancy zero, with graph 

\begin{center}
\begin{tikzpicture}
\draw[->] (-4,0)--(4,0);
\draw[->] (0,-1)--(0,1);
\draw[blue,thick] (0,-.5)--(1,.5);
\draw[blue,dotted] (1,.5)--(1,-.5);
\draw[blue,dotted] (2,.5)--(2,-.5);
\draw[blue,dotted] (3,.5)--(3,-.5);
\draw[blue,dotted] (-1,.5)--(-1,-.5);
\draw[blue,dotted] (-2,.5)--(-2,-.5);
\draw[blue,dotted] (-3,.5)--(-3,-.5);
\draw[blue,thick] (1,-.5)--(2,.5);
\draw[blue,thick] (2,-.5)--(3,.5);
\draw[blue,thick] (-1,-.5)--(0,.5);
\draw[blue,thick] (-3,-.5)--(-2,.5);
\draw[blue,thick] (-2,-.5)--(-1,.5);
\draw[blue,thick] (-3,.5)--(-3.3,.2);
\draw[blue,thick] (3,-.5)--(3.3,-0.2);
\draw[fill,blue] (0,0) circle[radius=0.03];
\draw[fill,blue] (1,0) circle[radius=0.03];
\draw[fill,blue] (2,0) circle[radius=0.03];
\draw[fill,blue] (3,0) circle[radius=0.03];
\draw[fill,blue] (-1,0) circle[radius=0.03];
\draw[fill,blue] (-2,0) circle[radius=0.03];
\draw[fill,blue] (-3,0) circle[radius=0.03];
\end{tikzpicture}
\end{center}

There is a ubiquitous character to these sums, for the wide range of contexts they appear in. The name Dedekind sums hinges on their relation to the logarithm of the Dedekind $\eta$-function
\begin{align*}
\eta(z)\ =\ e^{\frac{\pi iz}{12}}\prod_{n\geq1}\left(1-e^{2\pi inz}\right)
\end{align*} 
defined on the upper half plane $\h$, a classical player in the theories of modular forms, elliptic curves, and theta functions. More precisely,  for every $\gamma=\bsm a&b\\c&d\esm\in\SL(2,\Z)$,
\begin{align}\label{DTF}
\log\eta(\gamma z) - \log\eta(z)\ =\ \frac{1}{2}(\sign c)^2 \log\left( \frac{cz+d}{i\sign c}\right) + \frac{\pi i}{12} \Phi(\gamma) 
\end{align}
where the defect $\Phi(\gamma)$ arising from the ambiguity of the (principal branch of the) logarithm is given by
\begin{align}\label{Phi def}
\Phi(\gamma) \ =\ \begin{cases} b/d & c=0 \\ \frac{a+d}{c} - 12\mathrm{sign}(c) s(a;\abs{c}) & c\neq 0.
\end{cases}
\end{align}
While this is not obvious at first glance, the values of $\Phi$ are always integers. The latter fact, as many other fundamental properties pertaining to Dedekind sums, may be found in the monograph \cite{Rad}. While Dedekind's original proof of the transformation formula for $\log\eta$ above is of analytic nature, it can also be deduced by purely topological arguments. Atiyah's paper \cite{Ati} discusses this approach and offers an overview of the appearance of $\log\eta$ and the Dedekind sums in various contexts of number theory, topology and geometry. In particular, Atiyah exhibits no less than seven equivalent characterizations of $\log\eta$ across these different fields.

An alternative presentation of the Dedekind sums consists in defining $s(a;c)$ as a function on the cusp set of $\SL(2,\Z)$, which can be identified with the extended rational line $\Q\cup\{\infty\}$. This identification can then be exploited to study some of their related properties via continued fraction expansions, as is done in \cite{KM}.

We propose a modified construction. Let $\Gamma_\infty$ denote the stabilizer subgroup of $\Gamma=\SL(2,\Z)$ at $i\infty$, that is,
$$
\Gamma_\infty\ =\ \left\{ \bpm *&*\\&*\epm\in\SL(2,\Z)\right\}\ =\ \pm\bpm 1&\Z\\&1\epm.
$$
There is a one-to-one correspondence between the cusp set of $\Gamma$, i.e.
$$
\{\gamma(i\infty) : \gamma\in\Gamma\}
$$ and the quotient $\Gamma\slash\Gamma_\infty$. We can thus express (signed) Dedekind sums via the assignment
\begin{align*}
\bpm a & b \\ c & d\epm \Gamma_\infty \ \mapsto\  \frac{1}{12}\frac{a+d}{c} - \frac{1}{12}\Phi\bpm a & b \\ c & d\epm\quad \left(\overset{(\ref{Phi def})}{=}\ \sign(c)s(a;\abs{c})\right). 
\end{align*}
This map descends to the double coset $\Gamma_\infty \backslash\ \Gamma\slash\ \Gamma_\infty$. In fact, this is simply a manifestation of the periodicity of the Dedekind sums, since, for each integer $m$, 
$$
\bpm 1&m\\&1\epm\bpm a&b\\ c&d\epm\ =\ \bpm a+ mc& b+md\\ c&d\epm
$$
and
$$
\sign(c)s(a+mc;\abs{c})\ =\ \sign(c)s(a;\abs{c}).
$$
We call the resulting double coset function 
$
\mathcal{S}: \Gamma_\infty \backslash\ \Gamma\slash\ \Gamma_\infty \to\Q
$
the Dedekind symbol for $\SL(2,\Z)$. 

This construction may be generalized to any non-cocompact lattice $\Gamma<\SL(2,\R)$. For simplicity, let us assume for the rest of this introduction that our preferred cusp for $\Gamma$ is at $i\infty$ and that the corresponding stabilizer subgroup  is $\Gamma_\infty = \bsm *&*\\&*\esm\cap\Gamma =  \pm \bsm 1&\Z\\&1\esm$. 

\begin{Thm}\label{Dedekind symbols}
Let $\Gamma<\SL(2,\R)$ be a non-cocompact lattice. Then
\begin{enumerate}
\item There exists a continuous family $\{f_{k}\}_{k\in\R}$ of holomorphic nowhere vanishing functions on $\h$ that each transform with respect to the action of $\Gamma$ by
\begin{align*}
\qquad \log f_{k}(\gamma z) - \log f_{k}(z)\ =\ k\log\left(\frac{cz+d}{i\sign(c(-d))}\right) + 2\pi ik\phi(\gamma) 
\end{align*}
where $\phi$ is a real-valued function, $\log$ denotes the principal branch of the logarithm and $c(-d)=c$ if $c\neq 0$ and $-d$ otherwise.
\item   Moreover, for each integer $m$,
$$
\phi\left(\pm\bpm1&m\\&1\epm\right)\ =\ \frac{\mathrm{vol}(\Gamma\backslash\h)}{4\pi}m -\frac{1}{4},
$$
and the double coset function
\begin{align*}
\mathcal{S}\left(\Gamma_\infty\bpm a&*\\c&d\epm\Gamma_\infty\right)\ =\ \begin{cases} \frac{\mathrm{vol}(\Gamma\backslash\h)}{4\pi}\frac{a+d}{c} - \phi(\gamma) &c\neq 0\\ \infty & c=0
\end{cases}
\end{align*}
is well-defined. We call $\mathcal{S}:\Gamma_\infty \backslash\ \Gamma\slash\ \Gamma_\infty\to\R$ the Dedekind symbol for $\Gamma$ at its cusp at infinity.
\item  Let $\Gamma=\SL(2,\Z)$. Then $f_{1/2}(z)$ coincides with Dedekind's $\eta$-function, and $$\mathcal{S}\left(\Gamma_\infty\bsm a&*\\c&d\esm\Gamma_\infty\right) = \sign(c)s(a;\abs{c}).$$
\end{enumerate}
\end{Thm}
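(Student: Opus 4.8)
The plan is to realize the family $\{f_k\}$ through the Kronecker limit formula for the Eisenstein series $E(z,s)=\sum_{\gamma\in\Gamma_\infty\backslash\Gamma}\im(\gamma z)^s$ attached to the cusp at $i\infty$. This series is $\Gamma$-invariant for each $s$ and continues meromorphically in $s$, with a simple pole at $s=1$ whose residue $r=\res_{s=1}E(z,s)=\frac{1}{\vol(\Gamma\backslash\h)}$ is constant in $z$. Writing the Laurent expansion $E(z,s)=\frac{r}{s-1}+\mathcal{E}(z)+O(s-1)$, the second term $\mathcal{E}$ is again $\Gamma$-invariant, and substituting into $\Delta E=s(s-1)E$ (with $\Delta=y^2(\partial_{xx}+\partial_{yy})$) shows $\Delta\mathcal{E}=r$. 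Since $\Delta\log y=-1$, the function $\mathcal{E}+r\log y$ is harmonic on the simply connected domain $\h$, hence equals $\re(g)$ for a holomorphic $g$, unique up to an additive imaginary constant.

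I would then set $f_k:=\exp\!\big(-\tfrac{k}{2r}\,g\big)=f_1^{\,k}$; these are holomorphic, nowhere vanishing, and depend real-analytically, in particular continuously, on $k$. The transformation law is forced by the invariance of $\mathcal{E}$: from $\im(\gamma z)=y/\abs{cz+d}^2$ one gets $\re\big(g(\gamma z)-g(z)\big)=-2r\log\abs{cz+d}$, so $g(\gamma z)-g(z)+2r\log(cz+d)$ is holomorphic with vanishing real part, hence a purely imaginary constant $i\mu(\gamma)$. Dividing by $2r$ and inserting the normalizing factor $i\sign(c(-d))$ — whose role is exactly to rotate $cz+d$ into a half-plane on which the principal logarithm is additive — yields
$$
\log f_k(\gamma z)-\log f_k(z)=k\log\!\left(\frac{cz+d}{i\sign(c(-d))}\right)+2\pi i k\,\phi(\gamma),
$$
with $\phi(\gamma)=\frac{1}{2\pi i}\log\big(i\sign(c(-d))\big)-\frac{\mu(\gamma)}{4\pi r}+n(\gamma)$ for an integer $n(\gamma)$ recording the branch correction. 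Both surviving contributions are real, so $\phi$ is real-valued, proving (1).

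For (2) I would read $\mu\big(\bsm1&m\\&1\esm\big)$ off the Fourier expansion of $E(z,s)$ at the cusp: its constant term $y^s+\varphi(s)y^{1-s}$ contributes to $\mathcal{E}$ the leading behaviour $y-r\log y+\varphi_0$ as $y\to\infty$, so the holomorphic completion satisfies $g(z)=-iz+\varphi_0+o(1)$. Hence $g(z+m)-g(z)=-im$, giving $\log f_k(z+m)-\log f_k(z)=ikm/(2r)$; comparing with the transformation law (here $c=0$, $cz+d=1$, $i\sign(c(-d))=-i$) produces $\phi\big(\bsm1&m\\&1\esm\big)=\frac{m}{4\pi r}-\frac14=\frac{\vol(\Gamma\backslash\h)}{4\pi}m-\frac14$, the extra $-\frac14$ arising precisely from the normalizing factor that \eqref{DTF} suppresses when $c=0$. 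Well-definedness of $\mathcal{S}$ on double cosets then follows from the cocycle relation for $\phi$ inherited from the automorphy of $f_k$: under $\gamma\mapsto\bsm1&m\\&1\esm\gamma$ the term $\frac{\vol}{4\pi}\frac{a+d}{c}$ changes by $\frac{\vol}{4\pi}m$, which is matched by the change in $\phi$, and likewise for right multiplication and for the factor $-I$, so $\frac{\vol}{4\pi}\frac{a+d}{c}-\phi(\gamma)$ is constant on $\Gamma_\infty\backslash\Gamma/\Gamma_\infty$.

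Finally, for (3) with $\Gamma=\SL(2,\Z)$, the first Kronecker limit formula identifies $\re(g)$ with $\log\abs{\eta}^2$ (up to the $r\log y$ term and an additive constant), so after fixing the free imaginary constant in $g$ by the normalization of the $q$-expansion one obtains $f_{1/2}=\eta$. Comparing its transformation law with \eqref{DTF} in the range $c\neq0$, where $(\sign c)^2=1$ and the two normalizing factors coincide, gives $\phi(\gamma)=\frac{1}{12}\Phi(\gamma)$; substituting into $\mathcal{S}$ and invoking \eqref{Phi def} yields $\mathcal{S}\big(\Gamma_\infty\bsm a&*\\c&d\esm\Gamma_\infty\big)=\sign(c)s(a;\abs c)$. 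The main obstacle throughout is not the existence of $g$ — that is standard analytic input — but the exact bookkeeping of logarithm branches: pinning down the integer $n(\gamma)$, confirming that the factor $i\sign(c(-d))$ renders $\phi$ genuinely real and single-valued, and verifying that the cocycle identity for $\phi$ holds on the nose rather than merely modulo $\Z$, so that the double-coset invariance of $\mathcal{S}$ is exact.
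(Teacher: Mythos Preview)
Your proposal is correct and follows essentially the same route as the paper: build the Kronecker limit function from the Laurent expansion of $E(z,s)$ at $s=1$, add the $r\log y$ term to make it harmonic, take a holomorphic completion and set $f_k=\exp(-\tfrac{k}{2r}g)$, read off $\phi$ on translations from the constant Fourier term, and recover $\eta$ via the classical first limit formula. The one place where the paper is more explicit than your sketch is the double-coset invariance in part~(2): rather than appealing to a cocycle relation for $\phi$ directly, the paper passes to the auxiliary $\psi(\gamma)=\phi(\gamma)-\tfrac14\sign(c(-d))$ and checks that the logarithmic coboundary $\log j(\gamma_1,\gamma_2 z)+\log j(\gamma_2,z)-\log j(\gamma_1\gamma_2,z)$ vanishes \emph{exactly} (not merely modulo $2\pi i\Z$) whenever one factor is upper-triangular---which is precisely the branch bookkeeping you correctly identify as the main obstacle.
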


Goldstein \cite{Go,Go2} derived formally the functions $f_k$ from the Fourier expansion of Eisenstein series, and used them to give explicit formulas of Dedekind sums for certain principal congruence subgroups. Our approach differs from Goldstein's in that it does not rely on explicit Fourier coefficients and that we moreover prove the analytic existence of the functions $f_k$. Furthermore, the definition of Dedekind symbols as double coset functions is new.

We note that the function $f_k$ arises in association with the fixed cusp at $i\infty$. In the case of the full modular group, this is the only cusp, but for a more general group $\Gamma$, one can construct as many families $\{f_k\}$ as there are (inequivalent) cusps for $\Gamma$. We deduce from their construction the existence of a unique cusp form, natural analogue of the Dedekind $\eta$-function for a cofinite Fuchsian group.

\begin{Coro}\label{cusp form thm}
Let $\Gamma<\SL(2,\R)$ be a non-cocompact lattice. Then there exists a holomorphic, nowhere-vanishing real-weight cusp form $\eta_\Gamma$ for any positive real weight $k$, which generalizes Dedekind's $\eta$-function.
\end{Coro}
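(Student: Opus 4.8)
The plan is to set $\eta_\Gamma := f_k$, the member of the family produced by Theorem~\ref{Dedekind symbols}(1) for the prescribed weight $k>0$, and to check that it satisfies all the defining properties of a real-weight cusp form. Holomorphy and the nowhere-vanishing property are handed to us directly by Theorem~\ref{Dedekind symbols}(1), so the work lies entirely in extracting the automorphy and the cuspidal decay from the transformation law. Exponentiating that law gives
\begin{align*}
f_k(\gamma z)\ =\ (cz+d)^{k}\,\nu(\gamma)\,f_k(z),\qquad \nu(\gamma)\ =\ \bigl(i\sign(c(-d))\bigr)^{-k}e^{2\pi ik\phi(\gamma)},
\end{align*}
with all powers taken via the principal branch. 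Since $k$ and $\phi$ are real-valued we have $\abs{\nu(\gamma)}=1$, and because the displayed identity is a genuine transformation law for a single function $f_k$, the factor $\nu$ inherits the consistency (cocycle) relation of a weight-$k$ multiplier system at no extra cost. Thus $\eta_\Gamma$ is an automorphic form of weight $k$ with unitary multiplier system $\nu$.

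Next I would pin down the behavior at the cusp $i\infty$. Feeding $T=\bsm1&1\\&1\esm\in\Gamma_\infty$ (so $c=0$, $d=1$, $\sign(c(-d))=-1$, $cz+d=1$, $\log\tfrac{1}{-i}=\tfrac{i\pi}{2}$) into the transformation law and inserting the value $\phi(T)=\frac{\vol(\Gamma\backslash\h)}{4\pi}-\frac14$ from Theorem~\ref{Dedekind symbols}(2), the two $\frac{ik\pi}{2}$-contributions cancel and one is left with
\begin{align*}
f_k(z+1)\ =\ e^{\,ik\,\vol(\Gamma\backslash\h)/2}\,f_k(z).
\end{align*}
Hence $f_k$ has a Fourier-type expansion $f_k(z)=\sum_n a_n e^{2\pi i(n+\kappa)z}$ with cusp parameter $\kappa=\frac{k\,\vol(\Gamma\backslash\h)}{4\pi}$. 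Reading off the leading asymptotic $\log f_k(z)\sim 2\pi i\kappa z$ (as $\im z\to\infty$) from the construction of $f_k$ used to prove Theorem~\ref{Dedekind symbols}, the minimal exponent equals $\kappa$, which is strictly positive because $k>0$ and $\vol(\Gamma\backslash\h)>0$. Therefore $f_k(z)\to 0$ exponentially as $\im z\to\infty$, which is precisely the cuspidal vanishing at $i\infty$.

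The main obstacle is to upgrade this to a \emph{bona fide} cusp form, i.e. to control $\eta_\Gamma$ at every cusp of $\Gamma$ and not only at $i\infty$, since the whole construction is anchored at the single cusp $i\infty$. At a cusp $\mathfrak a$ with scaling matrix $\sigma_{\mathfrak a}$, the pullback $f_k|_k\sigma_{\mathfrak a}$ again has an expansion $\sum_n a_n^{(\mathfrak a)}e^{2\pi i(n+\kappa_{\mathfrak a})z}$ governed by $\nu$; here I would argue that the multiplier system forces $\kappa_{\mathfrak a}\notin\Z$ for generic $k$, with the admissible representative positive, so that the holomorphic nowhere-vanishing form $f_k$, being of moderate growth, must also decay at $\mathfrak a$. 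I expect this interaction of the infinity-anchored construction with the remaining cusps to require the most care, as it is exactly where holomorphy at the cusps (ruling out essential singularities) and the non-integrality of the local cusp parameters must be justified.

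Finally, the remaining assertions follow cheaply. Specializing to $\Gamma=\SL(2,\Z)$ and $k=\tfrac12$, Theorem~\ref{Dedekind symbols}(3) identifies $f_{1/2}$ with Dedekind's $\eta$, so $\eta_\Gamma$ genuinely generalizes it; and since the family $\{f_k\}_{k\in\R}$ contains a member for every real $k$, we obtain such a cusp form for every positive real weight. Uniqueness up to a scalar, alluded to in the discussion preceding the statement, would then follow by observing that the ratio of two holomorphic, nowhere-vanishing cusp forms of the same weight and multiplier descends to a holomorphic, nowhere-vanishing function on the compactified quotient $\overline{\Gamma\backslash\h}$ and is hence constant by the maximum principle.
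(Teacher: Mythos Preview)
Your plan to take $\eta_\Gamma=f_k$ for a single cusp breaks down precisely at the step you flagged as the ``main obstacle'': the function $f_{\frak a,k}$ attached to the cusp $\frak a$ does \emph{not} decay at the other cusps. Indeed, the Kronecker limit expansion at a different cusp $\frak b$ reads $K_\frak a(\sigma_\frak b z)+V^{-1}\ln y=\delta_{\frak{ab}}\,y+k_{\frak{ab}}(0)+O(e^{-2\pi y})$, so that
\[
\abs{j(\sigma_\frak b,z)^{-k}\,f_{\frak a,k}(\sigma_\frak b z)}\;=\;e^{-\frac{kV}{2}\left(\delta_{\frak{ab}}\,y+k_{\frak{ab}}(0)+O(e^{-2\pi y})\right)}\;\longrightarrow\;e^{-\frac{kV}{2}k_{\frak{ab}}(0)}\neq 0
\]
as $y\to\infty$ whenever $\frak b\neq\frak a$. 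Thus $f_{\frak a,k}$ tends to a nonzero constant at every cusp other than $\frak a$, which forces the cusp parameter $\kappa_{\frak b}$ of the multiplier system $\chi_{\frak a,k}$ to equal~$0$ at those cusps for \emph{every} $k$, contradicting your hoped-for ``generic non-integrality'' of $\kappa_{\frak b}$.

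The paper fixes this by taking a product over all inequivalent cusps $\frak a,\frak b,\dots,\frak n$: set
\[
\eta_{\Gamma,K}\;=\;f_{\frak a,k_1}\,f_{\frak b,k_2}\cdots f_{\frak n,k_n},\qquad k_j>0,\quad K=k_1+\cdots+k_n.
\]
Each factor $f_{\frak j,k_j}$ contributes exponential decay at its own cusp $\frak j$ and is merely bounded at the others, so the product decays exponentially at every cusp and is holomorphic, nowhere vanishing, of weight~$K$. Choosing the $k_j$ with prescribed sum $K$ yields the form in any positive real weight.
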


The second part of the paper concerns the distribution of values of the Dedekind symbol $\mathcal{S}$. 
The statistics of Dedekind sums have been extensively studied; we know that their values become equidistributed mod 1  \cite{Var}, and that this result extends to the graph $\left(\frac{a}{c},s(a;c)\right)$ \cite{Myer}. Bruggeman studied the distribution of $s(a;c)/c$ \cite{Brug} and Vardi showed that $s(a;c)/\log c$ has a limiting Cauchy distribution as $c\to\infty$ \cite{Vardi2}. The focus later shifted to the distribution of mean values of Dedekind sums \cite{CFKS,Zhang}.

Here, we will be interested in the problem of equidistribution mod 1 of values of the Dedekind symbol. Recall that a real sequence $( a_n)_{n\in\N}$ is said to become equidistributed mod 1 if its fractional parts are uniformly distributed. That is, for any subinterval $[a,b]\subset [0,1]$, if
$$
\lim_{n\to\infty}\frac{\#\{ i=1,\dots,n : \{a_i\}\in [a,b]\}}{n}\ =\ b-a,
$$
where $\{a_i\}$ denotes the fractional part of $a_i$. Weyl \cite{Wey} famously formulated an equivalent criterium for equidistribution mod 1 in terms of exponential sums: the sequence $(a_n)_{n\in\N}$ becomes equidistributed mod 1 if and only if, for every non-zero integer $m$, 
\begin{align*}
\sum_{n\leq N} e(ma_n)\ =\ o(N)
\end{align*}
where $e(x):= e^{2\pi ix}$.

Our interest in the question was motivated by Vardi's proof of the following strong form of equidistribution mod 1 for Dedekind sums. For any $k\in\R_{>0}$, the sequence of values 
$$
\left\{ k s(a;c)\right\}_{\substack{0\leq a<c \\ (a,c)=1}}
$$
becomes equidistributed mod 1 as $c\to\infty$ \cite[Thm. 1.6]{Var}. The building block of Vardi's proof is a striking identity relating Dedekind sums to Kloosterman sums. A simplified version goes as follows. 

Kloosterman sums, introduced as a refinement of the Hardy--Littlewood circle method, are defined by
$$
S(m,n;c)\ =\ \sum_{\substack{a=1\\ (a,c)=1\\ ad\equiv 1\ \text{mod}\ c}}^{c-1} e\left(\frac{ma+nd}{c}\right).
$$
Now Vardi observed the following identity; for any $m\in\N$,
\begin{align*}
\sum_{\substack{a=1\\ (a,c)=1}}^{c-1} e\left( 12m s(a;c)\right)\ &\stackrel{(\ref{Phi def})}{=}\ \sum_{\substack{a=1\\ (a,c)=1}}^{c-1} e\left(\frac{ma+md}{c}\right)\underbrace{e^{-2\pi im\phi(\gamma)}}_{\equiv 1}\ =\ S(m,m;c).
\end{align*}
It then follows, by Weyl's criterium, that the values along $\{12s(a;c)\}$ become equidistributed mod 1 as $c\to\infty$ if and only if, for each $m\in\N$, 
$$\sum_{c\leq x} S(m,m; c) = o(x^2).$$
That is, in other words, given enough cancellation in sums of Kloosterman sums. Such estimates exist, and a very strong variant of such estimates is provided by Weil's 
$$
S(m,n;c)\ \ll\ c^{1/2+\eps} \qquad (\eps>0).
$$
(The full force of Weil's estimate is absolutely not necessary here, see \cite{Var} or Theorem \ref{equidistribution mod 1} below.) Vardi then goes on to show that this holds true for any positive multiplicative scalar $k$, relying on the spectral theory of automorphic forms and work of Goldfeld--Sarnak \cite{GS}.

To approach the question of the equidistribution of Dedekind symbols, we first need to order double cosets in $\Gamma_\infty\backslash\Gamma\slash\Gamma_\infty$. Such a parametrization is encoded in the double coset decomposition
$$
\Gamma\ =\ \Gamma_\infty\cup\bigcup_{c>0}\bigcup_{\substack{0\leq a<c\\ \bsm a&*\\c&*\esm\in\Gamma}}\Gamma_\infty\bpm a&*\\c&*\epm\Gamma_\infty.
$$
(This is reviewed for convenience in Section 2.) 

\begin{Thm}\label{counting thm}
Let $\Gamma<\SL(2,\R)$ be a non-cocompact lattice, and let $x>0$. The double coset count 
$$
\pi(x)\ =\ \# \left\{ \Gamma_\infty\bpm a&*\\c&*\epm \Gamma_\infty : \substack{ 0< c<x\\ 0\leq a<c,}\  \bsm a&*\\c&*\esm\in\Gamma  \right\}
$$ is finite, and
$$
\pi(x)\ \sim\ \frac{x^2}{\pi \vol(\Gamma\backslash\h)}
$$
as $x\to\infty$.
\end{Thm}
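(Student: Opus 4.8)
The plan is to reduce the double coset count to a Dirichlet series built from the zeroth Kloosterman sums at the cusp at infinity, and then to extract the asymptotic from the residue of the associated Eisenstein series at $s=1$ via a Tauberian theorem. First I would fix the parametrization. Left multiplication by $\Gamma_\infty$ preserves the bottom row $(c,d)$ of a matrix, while right multiplication sends $(c,d)\mapsto(c,d+mc)$; since $ad\equiv1\pmod c$ the entry $a$ is then determined modulo $c$. Hence every double coset with $c\neq0$ has a unique representative with $c>0$ and $0\le a<c$, and writing $N(c)$ for the number of double cosets with lower-left entry $c$, we obtain $\pi(x)=\sum_{0<c<x}N(c)$. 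Finiteness of $\pi(x)$ is immediate from the discreteness of $\Gamma$: the admissible values of $c$ form a discrete subset of $\R_{>0}$ and each $N(c)$ is finite, since otherwise the cusps $a/c\in[0,1)$ would accumulate.

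Next I would bring in the real-analytic Eisenstein series attached to the cusp at infinity, $E(z,s)=\sum_{\gamma\in\Gamma_\infty\backslash\Gamma}\im(\gamma z)^s$, which converges for $\re s>1$. Its zeroth Fourier coefficient along that cusp has the shape $y^s+\varphi(s)y^{1-s}$, where the scattering coefficient factors as
\[
\varphi(s)\ =\ \sqrt{\pi}\,\frac{\Gamma(s-\tfrac12)}{\Gamma(s)}\,D(s),\qquad D(s)\ =\ \sum_{c>0}\frac{N(c)}{c^{2s}},
\]
the coefficient $N(c)$ being exactly the zeroth Kloosterman sum $S(0,0;c)$ at this cusp. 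I would then invoke the meromorphic continuation of $E(z,s)$: in the half-plane $\re s\ge1$ it is holomorphic apart from a simple pole at $s=1$ whose residue is the constant function $1/\vol(\Gamma\backslash\h)$. Comparing constant terms, and using that $\sqrt{\pi}\,\Gamma(s-\tfrac12)/\Gamma(s)$ is holomorphic and equals $\pi$ at $s=1$, gives
\[
\res_{s=1}D(s)\ =\ \frac{1}{\pi}\res_{s=1}\varphi(s)\ =\ \frac{1}{\pi\,\vol(\Gamma\backslash\h)}.
\]

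Finally I would convert this residue into the counting asymptotic. Viewing $D(s)=\sum_u b_u u^{-s}$ as a Dirichlet series in the variable $u=c^2$ with $b_{c^2}=N(c)\ge0$, the partial sums are $\sum_{u\le U}b_u=\pi(\sqrt U)$. Since the only pole of $D$ on the line $\re s=1$ is the simple pole at $s=1$ (this is where the precise location of the poles of the scattering coefficient enters), the Wiener--Ikehara Tauberian theorem yields $\sum_{u\le U}b_u\sim(\res_{s=1}D)\,U$, that is $\pi(x)\sim\frac{x^2}{\pi\vol(\Gamma\backslash\h)}$, as claimed.

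The main obstacle is analytic rather than combinatorial: one must have in hand the meromorphic continuation of $E(z,s)$ together with the fact that its only pole in $\re s\ge1$ sits at $s=1$ with the stated residue, since this is precisely the input the Tauberian theorem requires; I would cite the standard spectral theory for these facts. As a consistency check, for $\Gamma=\SL(2,\Z)$ one has $N(c)=\varphi(c)$ and $\vol(\Gamma\backslash\h)=\pi/3$, and indeed $\sum_{c<x}\varphi(c)\sim\frac{3}{\pi^2}x^2=\frac{x^2}{\pi\cdot\pi/3}$. A more geometric alternative, counting the discrete set of bottom rows $(c,d)$ inside the triangle $\{0<c<x,\ 0\le d<c\}$ of area $x^2/2$ against their natural density $2/(\pi\vol)$, leads to the same constant but rests on an equidistribution input of comparable depth.
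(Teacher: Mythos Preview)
Your argument is correct and shares its backbone with the paper: both form the Dirichlet series $D(s)=\sum_{c>0}N(c)c^{-2s}$, identify it with $\pi^{-1/2}\Gamma(s)\Gamma(s-\tfrac12)^{-1}\varphi(s)$, and read off the residue $1/(\pi\,\vol(\Gamma\backslash\h))$ at $s=1$ from the spectral theory of Eisenstein series. The divergence is in the last step. You invoke Wiener--Ikehara (legitimate here, since the coefficients are nonnegative and the known pole structure of $\varphi$ in $\re s\ge 1/2$ guarantees that $D(s)-A/(s-1)$ extends continuously to $\re s\ge1$), whereas the paper runs the effective Perron formula and shifts the contour to $\re s=\tfrac12+\eps$, using the bound $\varphi(s)=O(1)$ off the real axis and Stirling for the Gamma quotient. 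Your route is shorter and suffices for the bare asymptotic in the theorem; the paper's route yields the stronger statement $\pi(x)=\frac{x^2}{\pi\vol(\Gamma\backslash\h)}+\sum_j c_j x^{2\sigma_j}+O(x^{3/2+\eps})$, with secondary terms coming from the exceptional poles $\sigma_j\in(\tfrac12,1)$ of $\varphi$, and this quantitative form is what feeds into the Erd\H os--Tur\'an discrepancy bound at the end of the paper. One small slip: writing ``$ad\equiv1\pmod c$'' presumes $\Gamma=\SL(2,\Z)$; for a general lattice $c$ need not be an integer, and the correct justification that the left column determines the double coset is the matrix identity $\gamma^{-1}\gamma'=\bsm1&*\\0&1\esm$ whenever $\gamma,\gamma'$ share a left column.
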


The main result of this paper is the generalization of Vardi's strong form of equidistribution to Dedekind symbols for cofinite Fuchsian groups.

\begin{Thm}\label{equidistribution mod 1}
For any $k\in\R_{>0}$, the sequence of values 
$$
\left\{k\mathcal{S}(\Gamma_\infty\bsm a&*\\c&*\esm\Gamma_\infty)\right\}_{\substack{0\leq a<c\\ \bsm a&*\\c&*\esm\in\Gamma}}
$$ 
becomes equidistributed mod 1 as $c\to\infty$.
\end{Thm}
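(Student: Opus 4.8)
The plan is to transplant Vardi's strategy—Weyl's criterion plus cancellation in sums of Kloosterman sums—to the present setting, with the classical Kloosterman sums replaced by generalized Kloosterman sums attached to the multiplier system of the form $f_k$ produced in Theorem \ref{Dedekind symbols}(1). First I would invoke Weyl's criterion together with the counting asymptotic $\pi(x)\sim x^2/(\pi\vol(\Gamma\backslash\h))$ of Theorem \ref{counting thm}, which reduces the claimed equidistribution mod $1$ to showing that, for every nonzero integer $m$,
$$\Sigma_m(x)\ :=\ \sum_{0<c<x}\ \sum_{\substack{0\leq a<c\\ \bsm a&*\\c&*\esm\in\Gamma}} e\bigl(mk\,\mathcal{S}(\Gamma_\infty\bsm a&*\\c&*\esm\Gamma_\infty)\bigr)\ =\ o(x^2).$$

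Next I would substitute the explicit formula for $\mathcal{S}$ from Theorem \ref{Dedekind symbols}(2). Writing $\kappa=mk$ and recalling $e(x)=e^{2\pi ix}$, each summand factors as
$$e\bigl(\kappa\,\mathcal{S}(\gamma)\bigr)\ =\ e\Bigl(\frac{\kappa\,\vol(\Gamma\backslash\h)}{4\pi}\cdot\frac{a+d}{c}\Bigr)\,\overline{\nu_\kappa(\gamma)},\qquad \nu_\kappa(\gamma):=e^{2\pi i\kappa\phi(\gamma)}.$$
By part (1) of Theorem \ref{Dedekind symbols}, $\nu_\kappa$ is exactly the unitary multiplier system with which the nowhere-vanishing form $f_\kappa$ transforms, and by part (2) its behaviour on $\Gamma_\infty$ is governed by the value $\phi(\pm\bsm1&1\\&1\esm)=\tfrac{\vol(\Gamma\backslash\h)}{4\pi}-\tfrac14$. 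Consequently, for each fixed $c$ the inner sum over the double cosets with lower-left entry $c$ is a generalized Kloosterman sum $S_{\nu_\kappa}(\cdot,\cdot\,;c)$ for the weight-$\kappa$ multiplier system $\nu_\kappa$, the frequencies being read off from the $\frac{a+d}{c}$-phase above. This is the precise analogue of Vardi's identity $\sum_a e(12m\,s(a;c))=S(m,m;c)$, now carrying a genuine (and in general infinite-order) multiplier.

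The spectral input then enters: I would apply the Goldfeld--Sarnak estimates \cite{GS} for sums of Kloosterman sums attached to a cofinite Fuchsian group with unitary multiplier. Via the analytic continuation of the associated Poincaré series and the location of the poles of the resolvent of the Laplacian acting on $\nu_\kappa$-automorphic functions, these yield a bound of the shape
$$\sum_{0<c\leq x}\frac{S_{\nu_\kappa}(\cdot,\cdot\,;c)}{c}\ \ll\ x^{2\sigma_0-1},$$
where $\sigma_0<1$ is determined by the largest exceptional eigenvalue (and $\sigma_0=\tfrac12$ when none exist). Crucially, since $m\neq0$ the effective frequencies are nonzero, so the constant eigenfunction at spectral parameter $s=1$ contributes no main term and the exponent stays strictly below $1$. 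Partial summation upgrades this to $\Sigma_m(x)\ll x^{2\sigma_0}=o(x^2)$, which is exactly Weyl's criterion, and the theorem follows.

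The main obstacle I anticipate is the interface between the second and third steps: setting up the generalized Kloosterman sums with the correct multiplier system and cusp parameters so that the Goldfeld--Sarnak machinery applies verbatim, and verifying that for every nonzero $m$ one is genuinely in the regime with no $s=1$ main term. It is worth emphasizing that, unlike the classical $k\in\Z$ case where Weil's bound already furnishes $\sum_{c\le x}S(m,m;c)\ll x^{3/2+\eps}$, for general real $k$ the multiplier $\nu_\kappa$ need not be of finite order, Weil's bound is unavailable, and the spectral cancellation of Goldfeld--Sarnak is indispensable.
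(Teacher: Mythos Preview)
Your proposal is correct and matches the paper's proof essentially step for step: Weyl's criterion together with Theorem~\ref{counting thm} reduces to bounding the exponential sum, the substitution of the formula for $\mathcal{S}$ yields a Vardi-type identity expressing it as a sum of generalized Kloosterman sums for the multiplier system attached to $f_\kappa$, and the Goldfeld--Sarnak estimate \cite{GS} followed by partial summation gives an exponent strictly below~$2$. The only small slip is that the genuine multiplier system is $e^{2\pi i\kappa\psi}$ with $\psi(\gamma)=\phi(\gamma)-\tfrac14\sign(c(-d))$ rather than $e^{2\pi i\kappa\phi}$ itself (this accounts for a harmless global phase $e(-\kappa/4)$), but this does not affect the argument.
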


Our proof is intrinsically similar to that of \cite{Var}. In fact, we also recover a type of Vardi identity in terms of Kloosterman sums twisted by a multiplier system (cf.~Proposition \ref{prop Vardi identity}). Such sums were introduced by Selberg \cite{Sel} to estimate the order of magnitude of Fourier coefficients of cusp forms, and the work of Goldfeld--Sarnak \cite{GS} provides us with an estimate of the growth of sums of such Kloosterman sums. The new difficulty, at this level of generality, is that one needs precise control on the growth of double cosets. This extra gap is bridged by the content of Theorem \ref{counting thm}.

\section{Preliminaries}

This section reviews the necessary facts and results on Fuchsian groups, Eisenstein series, real weight automorphic forms, ordering of double cosets, and Selberg's generalized Kloosterman sums.

\subsection*{Non-cocompact lattices of $\SL(2,\R)$}
 
Let $\Gamma<\SL(2,\R)$ be a lattice, with the assumption that $-I\in\Gamma$. The projection $\overline{\Gamma}<\PSL(2,\R)$ acts properly discontinuously on the upper-half plane $\h$ by linear fractional transformations
$$
\bpm a&b\\c&d\epm\ :\ z\ \mapsto\ \frac{az+b}{cz+d}.
$$
 If moreover $\Gamma$ is non-cocompact, then $\Gamma$ admits a finite number of inequivalent cusps. A cusp $\frak{a}\in\partial\h = \R\cup\{\infty\}$ is a parabolic fixed point for the extended action of $\overline{\Gamma}$ on $\partial\h$. Two cusps $\frak{a}$ and $\frak{b}$ are said to be equivalent if $\gamma\frak{a}=\frak{b}$ for some $\gamma\in\Gamma$. In practice, it is most useful to work with the cusp at $\infty$. There is a standard change of coordinates to achieve this. In fact, for each cusp $\frak{a}$, there exists a scaling matrix $\sigma_\frak{a}\in\SL(2,\R)$ that verifies
\begin{enumerate}
\item $\sigma_\frak{a}(\infty) =\frak{a}$
\item $\sigma_\frak{a}^{-1}\Gamma_\frak{a}\sigma_\frak{a} = \left(\sigma_\frak{a}^{-1}\Gamma\sigma_\frak{a}\right)_\infty = \pm\bpm 1 & \Z \\ & 1\epm$
\end{enumerate}
These two conditions do not determine a scaling matrix uniquely, but up to right multiplication by any matrix of the form $\pm\bsm1&x\\&1\esm$, $x\in\R$ \cite[p. 40]{Iwa}.

\subsection*{Double coset decomposition}
Let 
$$
T\ :=\ \pm \bpm 1&\Z\\&1\epm.
$$
The trivial computation
$$
\bpm 1 & m \\ & 1 \epm\bpm a & b \\ c & d \epm\bpm 1 & n \\ & 1 \epm\ =\ \bpm a + mc & * \\ c & d+nc \epm,
$$
shows that the lower left matrix entry $c$ depends only on the double coset $T\gamma T$, and that $a$ and $d$ are determined up to integer multiples of $c$. Each double coset for which $c\neq0$ has then a unique representative of the form
 $$
T\bpm a & * \\ c & d \epm T,\qquad c>0,\quad 0\leq a,\ d< c.
$$
Moreover, for $\gamma= \bsm a & b \\ c & d\esm$ and $\gamma' = \bsm a & b' \\ c & d'\esm$ two matrices of determinant 1, one has
\begin{align*}
\gamma^{-1}\gamma'\ =\ \bpm 1 & * \\ 0 & 1 \epm.
\end{align*}
Therefore, any double coset $T\gamma T$ for which $c\neq0$ is really only determined by the left column of the representative $\gamma$. On the other hand, $\bsm *&*\\&*\esm\cap\left(\sigma_\frak{a}^{-1}\Gamma\sigma_\frak{a}\right)=T$. In conclusion,
\begin{align*}
\sigma_\frak{a}^{-1}\Gamma\sigma_\frak{a}\ =\ T \cup \bigcup_{c>0}\bigcup_* T\bpm a & * \\ c & *\epm T
\end{align*}
where the union $\bigcup_*$ is taken over all $\bpm a & * \\ c & *\epm\in\sigma_\frak{a}^{-1}\Gamma\sigma_\frak{a}$ with $0\leq a< c$. 
For any $x>0$, there are at most finitely many double cosets $T\bsm a&*\\c&d\esm T$ such that $\bsm a&*\\c&d\esm\in\sigma_\frak{a}^{-1}\Gamma\sigma_\frak{a}$ and $\abs{c}\leq x$ \cite[Lm. 1.24]{Shi}.

\subsection*{Eisenstein series}
The Eisenstein series for $\Gamma$ at its cusp at $\frak{a}$ is defined by
\begin{align*}
E_\frak{a}(z,s)\ :=\ E_\frak{a}(z,s;\Gamma)\ =\ \sum_{\gamma\in \Gamma_\frak{a}\backslash\Gamma} \im(\sigma_\frak{a}^{-1}\gamma z)^s 
\end{align*}
where $z\in\h$, $s=\sigma +it\in\C$. The series converges absolutely and uniformly on compact subsets for $\sigma>1$. As a function of $z$, it is $\Gamma$-invariant, non-holomorphic and satisfies
$$
\Delta E_\frak{a}(z,s)\ =\ s(1-s) E_\frak{a}(z,s), 
$$ 
for the (positive) hyperbolic Laplacian $\Delta=-y^2\left(\partial_{xx} +\partial_{yy}\right)$. Eisenstein series admit a Fourier expansion in each cusp, which takes the form
\begin{align*}
E_\frak{a}(\sigma_\frak{b}z,s)\ =\ \delta_\frak{ab}y^s + \varphi_\frak{ab}(s)y^{1-s} +O\left(e^{-2\pi y}\right)
\end{align*}
where 
\begin{align*}
\varphi_\frak{ab}(s)\ =\ \sqrt\pi\frac{\Gamma(s-1/2)}{\Gamma(s)}\sum_{c>0}\frac{\#\{  a\in[0,c) : \bsm a&*\\c&*\esm\in\sigma_\frak{a}^{-1}\Gamma\sigma_\frak{b}\} }{c^{2s}}
\end{align*}
\cite[Thm. 3.4]{Iwa}. In the definition above, $\Gamma(s)$ denotes the classical Gamma function, which is holomorphic on the complex plane except for simple poles at every non-positive integer.

Eisenstein series famously admit a meromorphic continuation to the whole complex $s$-plane, which follows from the meromorphic continuation of $\varphi_\frak{ab}$ \cite{STF}. In particular, $\varphi_\frak{ab}(s)$ is holomorphic in the half-plane $\sigma>1/2$ except for possibly finitely many simple poles $\sigma_j\in(1/2,1)$ and a simple pole at $s=1$ of residue $$\frac{1}{\vol(\Gamma\backslash\h)}.$$ Moreover, away from the real line, $\varphi_\frak{ab}(s)$ is bounded in the half-plane $\sigma>1/2$ \cite[Eq. (8.6)]{Sel2}.

\subsection*{Real weight automorphic forms}

Such an automorphic form is understood to be a holomorphic function on $\h$ that transforms by
$$
f(\gamma z)\ =\ \chi_k(\gamma) (cz+d)^k f(z),
$$
for each $\gamma\in\Gamma$, some fixed weight $k\in\R$ and where the so-called multiplier system $\chi_k(\Gamma)$ must be consistent with the given determination of $\arg(cz+d)$ such that 
$$
(cz+d)^k\ =\ \abs{cz+d}^k e^{ik\arg(cz+d)}
$$
is uniquely determined. Explicitly, we say that $\chi_k:\Gamma\to\C$ defines a {\it multiplier system of weight $k$ for $\Gamma$} if it satisfies the consistency conditions
\begin{enumerate}
\item For all $\gamma\in\Gamma$, $\abs{\chi_k(\gamma)}=1$
\item $\chi_k(-I) = e^{-\pi ik}$
\item $\chi_k(\gamma_1\gamma_2) j(\gamma_1\gamma_2,z)^k = \chi_k(\gamma_1)\chi_k(\gamma_2)j(\gamma_1,\gamma_2 z)^k j(\gamma_2,z)^k$
\end{enumerate}
where $j(\gamma,z):= cz+d$ and $\arg(z)\in(-\pi,\pi]$. The Dedekind $\eta$-function is an automorphic form of weight $1/2$ for $\SL(2,\Z)$ with respect to the multiplier system $$\chi_{1/2} = e^{\frac{\pi i}{12}\Phi}$$ with the additional property that it is nowhere zero on $\h$. More generally, the existence of real-weight automorphic forms, however not necessarily holomorphic, is guaranteed if $\Gamma$ has at least one cusp \cite[pp. 333-335]{Hej}.

\subsection*{Selberg's general Kloosterman sums}
Assume again that $\Gamma$ has a cusp at $i\infty$ and stabilizer subgroup $\Gamma_\infty = \pm \bsm 1&\Z\\&1\esm$. An automorphic form for $\Gamma$ will verify the periodicity relation 
$$
f(z+1)\ =\ \chi_k\bpm1&1\\&1\epm f(z)
$$
and thus admit a Fourier expansion of the form 
$$
f(z)\ =\ \sum c_n e^{2\pi i(n-\alpha)z}
$$
where $\alpha :=\alpha(\chi_k)$ denotes the unique scalar in $[0,1)$ such that $$\chi_k\bpm1&1\\&1\epm = e^{-2\pi i\alpha}.$$
The famous problem of estimating the order of magnitude of Fourier coefficients of a cusp form can be reduced to estimating the {\it generalized Kloosterman sums} 
\begin{align*}
S\left(m,n\ ; c, \chi_k\right)\ :=\ \sum_{*} \overline{\chi_k\bpm a&*\\c&d\epm}e\left( \frac{(m-\alpha)a + (n-\alpha)d}{c}\right) 
\end{align*}
where the summation symbol $\sum_*$ is indexed according to the double coset decomposition \cite{Sel}. The trick is to take advantage of the probable cancellation of terms due to variations in the argument by working directly with
$$
\sum_{0<c\leq x} \frac{S(m,n\ ; c,\chi_k)}{c}.
$$ 
The study of sums of Kloosterman sums has a long history, which goes well beyond the scope of our purpose. It is sufficient for us to know that the above average has an expansion of the form
\begin{align}\label{GS estimate}
\sum_{0<c\leq x} \frac{S(m,n\ ; c,\chi_k)}{c}\ =\ \sum_{j=1}^l \tau_j x^{\alpha_j} + O\left( x^{\beta/3+\eps}\right)\qquad (\eps>0)
\end{align}
for certain constants $\tau_j$, $\alpha_j\in(0,1)$ and $\beta \leq 1$ \cite[Thm. 2]{GS}.

We note that both Selberg and Goldfeld--Sarnak assumed that $\Gamma$ is a finite index subgroup of $\SL(2,\Z)$, but, as noted in \cite{GS}, there is no need to make that restriction.
 
\section{Proof of Theorem \ref{Dedekind symbols}}

\subsection*{Part (1)}

Let $\Gamma<\SL(2,\R)$ be a non-cocompact lattice. Fix a cusp $\frak{a}$ for $\Gamma$. If $\frak{a}$ is not equivalent to $\infty$, fix a scaling matrix $\sigma_\frak{a}$. Observe that
$$
\vol(\Gamma\backslash\h)\ =\ \vol(\sigma_\frak{a}^{-1}\Gamma\sigma_\frak{a}\backslash\h)\ =:\ V
$$
and
$$
E_\frak{a}(\sigma_\frak{a}z,s;\Gamma)\ =\ E_\infty(z,s;\sigma_\frak{a}^{-1}\Gamma\sigma_\frak{a})\ =:\ E(z,s)
$$
-- note that $E(z,s)$ does not depend on the choice of the cusp representative $\frak{a}$ nor of the scaling matrix $\sigma_\frak{a}$. The constant term of the Eisenstein series in the Laurent expansion at its first pole $s=1$,
\begin{align*}
E(z,s)\ =\ \frac{V^{-1}}{s-1} + K(z) + O(s-1) \qquad(s\to1)
\end{align*}
is called the first-order Kronecker limit function $K(z)$. Using the Fourier expansion of the Eisenstein series, Jorgenson and O'Sullivan derive that 
\begin{align}\label{Fourier exp KL}
K(z)\ =\ \sum_{n<0}k(n)e(n\overline{z}) + y + k(0) - V^{-1}\ln y + \sum_{n>0}k(n)e(nz)
\end{align}
and prove that the constants $k(n)$ satisfy $k(-n)=\overline{k(n)}$ and $k(n)\ll \abs{n}^{1+\eps}$ with an implied constant depending only on $\Gamma$ and $\eps>0$ \cite[Thm. 1.1]{JO}. In particular, the Kronecker limit function is well-defined. Moreover, it is a $\sigma_\frak{a}^{-1}\Gamma\sigma_\frak{a}$-invariant function, and can be seen to be real-valued and real-analytic if we set
\begin{align*}
K(z)\ =\ \lim_{\substack{s\to 1 \\ s\in\R_{>1}}} \left( E(z,s) - \frac{V^{-1}}{s-1}\right).
\end{align*}
A simple computation yields $$\Delta K(z) = \frac{-1}{V}$$ and from that observation we construct the harmonic function
\begin{align*}
H(z)\ :=\ V K(z) + \ln\im(z).
\end{align*}
Let $F:\h\to\C$ denote the holomorphic function with real part $\re F(z)=H(z)$. Observe that $F$ won't be automorphic, as the perturbation from $K$ to $H$ induces the logarithmic defect
\begin{align*}
H(z) - H(\gamma z)\ =\ \ln \abs{cz+d}^2.
\end{align*}
By analogy with Dedekind's transformation formula for $\log\eta$, we want to consider the RHS as the real part of the {\it principal} branch of logarithm, that is, the branch with $-\pi<\arg(z)\leq\pi$.  Hence
$$
\re\left( F(z) -F(\gamma z)\right)\ =\ \re\log(-(cz+d)^2).
$$
As a consequence of the Open Mapping Theorem, the difference
$$
\phi_{\frak{a}}(\gamma) := \frac{1}{2\pi i}\left( \frac{F(z)}{2} - \frac{F(\gamma z)}{2} -\frac{\log\left(-(cz+d)^2\right)}{2}\right)
$$
is a real-valued function of $\sigma_\frak{a}^{-1}\Gamma\sigma_\frak{a}$ that does not depend on $z$. Finally, note that $\phi_\frak{a}(-\gamma)=\phi_\frak{a}(\gamma)$ for all $\gamma\in\sigma_\frak{a}^{-1}\Gamma\sigma_\frak{a}$.

Define, for each scalar $k\in\R$, the function 
$$
f_{\frak{a},k}(z)\ =\ e^{-kF(z)/2}.
$$
It is holomorphic, nowhere vanishing, with $\log f_{\frak{a},k}(z) = -kF(z)/2$ and
$$
\log f_{\frak{a},k}(\gamma z)\ =\ \log f_{\frak{a},k}(z) +\frac{k}{2}\log\left(-(cz+d)^2\right) + 2\pi ik\phi_\frak{a}(\gamma)
$$
for all $\gamma\in\Gamma$, $z\in\h$. Finally, observe that 
\begin{align*}
\log(-(cz+d)^2)\ &=\ 2\left( \log(cz+d) -\frac{\pi i}{2}\sign(c(-d))\right)\\ &=\ 2\log\left(\frac{cz+d}{i\sign(c(-d))}\right).
\end{align*}
where the symbol $c(-d)=c$ if $c\neq 0$ and $-d$ otherwise. 

\subsection*{Part (2)}\label{phi section}

Let $\Gamma<\SL(2,\R)$ be a non-cocompact lattice. Fix a cusp $\frak{a}$ for $\Gamma$. If $\frak{a}$ is not equivalent to $\infty$, fix a scaling matrix $\sigma_\frak{a}$ as before. We define the Dedekind symbol for $\Gamma$ at its cusp $\frak{a}$ as the double coset function $\mathcal{S}_\frak{a}:\Gamma_\frak{a}\backslash\Gamma\slash\Gamma_\frak{a}\to\R$, given by
$$
\mathcal{S}_\frak{a}\left(\Gamma_\frak{a}\gamma\Gamma_\frak{a}\right)\ =\ \begin{cases} \frac{V}{4\pi}\frac{a+d}{c} -\phi_\frak{a}\bpm a&b\\ c&d\epm & c\neq 0 \\ \frak{a} & c=0 \end{cases}
$$
where 
$$
\bpm a&b\\c&d\epm\ =\ \sigma_\frak{a}^{-1}\gamma\sigma_\frak{a}
$$
and $\phi_\frak{a}:\sigma_\frak{a}^{-1}\Gamma\sigma_\frak{a}\to\R$ as in the proof of Part (1). We will show that $\mathcal{S}_\frak{a}$ is indeed well-defined. 

First, this definition does not depend on the choice of the scaling matrix $\sigma_\frak{a}$ associated to $\frak{a}$. In fact, going through the construction of the function $\phi_\frak{a}$ from Part (1) for the groups $\sigma_\frak{a}^{-1}\Gamma\sigma_\frak{a}$ and $\sigma_\frak{a}'^{-1}\Gamma\sigma_\frak{a}'$ where $\sigma_\frak{a}' =\sigma_\frak{a}n_x$, $n_x =\bsm 1&x\\&1\esm$, yields
$$
\phi_{\frak{a},\sigma_\frak{a}^{-1}\Gamma\sigma_\frak{a}}(n_x^{-1}\gamma n_x)\ =\ \phi_{\frak{a},\sigma'^{-1}_\frak{a}\Gamma\sigma_\frak{a}'}(\gamma)
$$
for all $\gamma\in\sigma_\frak{a}^{-1}\Gamma\sigma_\frak{a}$. 

To show that 
$$
\frac{V}{4\pi}\frac{a+d}{c} - \phi_\frak{a}\bpm a&*\\c&d\epm
$$ 
is well defined on double cosets $T\bsm a&*\\c&d\esm T$, it suffices to prove that, for any $m,n\in\Z$, 
\begin{align}\label{show this}
\phi_\frak{a}\left(\bpm1&m\\&1\epm \gamma \bpm 1&n\\&1\epm\right)\ =\ \phi_\frak{a}(\gamma) +(m+n)\frac{V}{4\pi}
\end{align}
For $k=1$, let $f_\frak{a}\equiv f_{\frak{a},1}$. We have
$$
\log f_\frak{a}(\gamma z) -\log f_\frak{a}(z)\ =\ \log(cz+d) +2\pi i\left(\phi_\frak{a}(\gamma) -\frac{\sign(c(-d))}{4}\right).
$$
Consider the associated function
\begin{align*}
\psi_\frak{a}(\gamma)\ :=\ \phi_\frak{a}(\gamma) - \frac{1}{4}\sign(c(-d))
\end{align*}
on $\sigma_\frak{a}^{-1}\Gamma\sigma_\frak{a}$. (Note that contrarily to $\phi_\frak{a}$, $\psi_\frak{a}$ is not well defined on $\sigma_\frak{a}^{-1}\Gamma\sigma_\frak{a}\slash\{\pm I\}$.) Comparing 
\begin{eqnarray*}
\psi_\frak{a} (\gamma_1\gamma_2) & = & \frac{1}{2\pi i}\left( \log\eta_\frak{a}(\gamma_1\gamma_2 z) - \log\eta_\frak{a} (z) -  \log j(\gamma_1\gamma_2,z)\right) \\
\psi_\frak{a} (\gamma_1) & = & \frac{1}{2\pi i}\left( \log\eta_\frak{a} (\gamma_1\gamma_2 z) - \log\eta_\frak{a} (\gamma_2 z) -  \log j(\gamma_1,\gamma_2 z)\right) \\
\psi _\frak{a}(\gamma_2) &  = & \frac{1}{2\pi i}\left( \log\eta_\frak{a} (\gamma_2 z) - \log\eta_\frak{a} (z) - \log j(\gamma_2,z)\right)
\end{eqnarray*}
yields the coboundary
$$
\psi_\frak{a}(\gamma_1\gamma_2) - \psi_\frak{a}(\gamma_1) - \psi_\frak{a}(\gamma_2)\ =\ \frac{1}{2\pi i}\left(\log j(\gamma_1,\gamma_2 z) + \log j(\gamma_2,z) -\log j(\gamma_1\gamma_2,z)\right).
$$
Using the cocycle relation 
$$
j(\gamma_1\gamma_2,z)\ =\ j(\gamma_1,\gamma_2 z)j(\gamma_2,z),
$$
 we observe that if either $\gamma_1$ or $\gamma_2$ is of the form $\bsm1&m\\&1\esm$, then the RHS of the coboundary equation above vanishes. Hence, as $c$ in $T\bsm a&*\\c&d\esm T$ is uniquely determined,
$$
 \psi_\frak{a}\left(\bpm 1&m\\&1\epm\gamma\bpm 1&n\\&1\epm\right)\
 =\  \psi_\frak{a}\bpm1&m\\&1\epm + \psi_\frak{a}\bpm1&n\\&1\epm +\psi_\frak{a}(\gamma).
 $$
We next determine 
$$
\psi_\frak{a}\bpm 1&m\\&1\epm.
$$

The expansion (\ref{Fourier exp KL}) for the Kronecker limit function $K(z)$ is equivalent to
$$
\re\left(-\log f_\frak{a}(z)\right)\ =\ \frac{V}{2}\re\left(\frac{z}{i} +k(0) +2\sum_{n>0} k(n)e(nz)\right).
$$
Denote the expression in parenthesis on the RHS by $U(z)$. By yet another application of the Open Mapping Theorem, 
\begin{align*}
\log f_\frak{a}(z) +U(z)\ =\  \log f_\frak{a}(z+m) + U(z+m)
\end{align*}
and observe that 
\begin{align*}
U(z) - U(z+m)\ =\ i m \frac{V}{2}.
\end{align*}
Then
$$
\psi_\frak{a}\bpm 1&m\\&1\epm\ =\ \frac{1}{2\pi i}\left(\log f_\frak{a}(z+m) - \log f_\frak{a}(z)\right)\ =\ m \frac{V}{4\pi}.
$$
Equation (\ref{show this}) follows.

\subsection*{Part (3)}
We run through the proof of part (1) replacing the definition of the harmonic function $H(z)$ by
$$
H(z)\ =\ \frac{\pi}{3} K(z) + \ln(y) - c +\ln(4\pi).
$$
Here, $c$ denotes the Euler constant. Note that this perturbation by a constant has no impact on the properties of $H(z)$ that are necessary in the proof (harmonicity, transformation formula). Then, by Kronecker's first limit formula 
$$
K(z)\ =\ \frac{3}{\pi}\left( c-\ln(4\pi) -\ln\left( y\abs{\eta(z)}^4\right)\right),
$$
we have $H(z) = -4\ln\abs{\eta(z)}$. By the usual Open Mapping argument, there exists an imaginary constant $\alpha\in i\R$ such that $F(z) = -4\log\eta(z) +\alpha$, with $F(z)$ the holomorphic function such that $\re(F(z))=H(z)$. It now follows from our definitions that
$$
\phi(\gamma) = \frac{2}{2\pi i}\left( \log\eta(\gamma z) -\log\eta(z) -\frac{1}{2}\log\left(\frac{cz+d}{i\sign(c)}\right)\right) \overset{(\ref{DTF})}{=} \frac{\Phi(\gamma)}{12}.
$$

\section{Proof of Corollary \ref{cusp form thm}}

Let $\Gamma<\SL(2,\R)$ be a non-cocompact lattice. Let $\frak{a},\frak{b},\dots,\frak{n}$ be all inequivalent cusps for $\Gamma$. By definition of $f_{\frak{a},k}$,
$$
\abs{f_{\frak{a},k}(\sigma_\frak{b}z)}\ =\ e^{-\frac{k}{2}H_\frak{a}(\sigma_\frak{b} z)}\ =\ \abs{j(\sigma_\frak{b}, z)^k} e^{-\frac{k}{2}\left(VK_\frak{a}(\sigma_\frak{b} z) +\ln y\right)}.
$$ 
By \cite[Eq. (4.7)]{JO}, $K_\frak{a}(\sigma_\frak{b} z) + V^{-1}\ln y\ =\ \delta_\frak{ab}y + k_\frak{ab}(0) + O\left( e^{-2\pi y}\right).$ Hence,
$$
j(\sigma_b,z)^{-k} f_{\frak{a},k}(\sigma_\frak{b}z)\ \ll\ e^{-\frac{kV}{2}\delta_\frak{ab} y}
$$
as $y\to\infty$. In particular, for any fixed positive real weight $k$, $f_{\frak{a},k}$ decays exponentially in the cusp $\frak{a}$ and only in that cusp. Set
$$
\eta_{\Gamma,K}\ =\ f_{\frak{a},k_1}f_{\frak{b},k_2}\cdots f_{\frak{n},k_n}
$$
for $k_1,k_2,\dots,k_n>0$, $K:= k_1+k_2+\dots + k_n$. This function is holomorphic, nowhere vanishing and decays exponentially in every cusp. In fact, for each cusp $\frak{j}$,
$$
j(\sigma_\frak{j},z)^{-K}\eta_{\Gamma,K}(\sigma_\frak{j}z)\ \ll\ e^{-\frac{k_j V}{2} y}
$$
as $y\to\infty$.

\section{Proof of Theorem \ref{counting thm}}

We define the zeta function
\begin{align*}
Z(s)\ :=\ \sum_{c>0} \frac{a_c}{c^{2s}}\qquad (s=\sigma+it\in\C)
\end{align*}
with coefficients $(a_c)_{c>0}$ given by 
$$
a_c\ =\ \#\left\{ 0\leq a<c : \bpm a&*\\c&*\epm\in\sigma_\frak{a}^{-1}\Gamma\sigma_\frak{a}\right\}.
$$
The series is absolutely convergent for $\sigma>1$, as $a_c\ll c^2$ \cite[Prop. 2.8]{Iwa}.  By an integration by parts argument, for any $s$ with $\sigma>1$,
\begin{align*}
Z(s)\ &=\ \lim_{x\to\infty} \sum_{c\leq x} \frac{a_c}{c^{2s}}\\ &=\ \lim_{x\to\infty}\left(\frac{\pi(x)}{x^{2s}} + 2s\int^x_0\pi(u)u^{-2s-1}du\right)\ =\ 2s\pi^\wedge(2s),
\end{align*}
where $\pi^\wedge(\cdot)$ denotes the Mellin transform of $\pi(\cdot)$.

By the Mellin Inversion Theorem, we recover the Perron formula
$$
\pi(x)\ =\ \frac{1}{2\pi i}\int_{(\sigma)} Z(s) \frac{x^{2s}}{s}ds\ :=\ \lim_{T\to\infty}\frac{1}{2\pi i}\int_{\sigma-iT}^{\sigma+iT} Z(s)\frac{x^{2s}}{s}ds
$$
again for $\sigma>1$. To obtain an asymptotic growth rate for $\pi(x)$, one can apply the effective Perron formula \cite[Thm. 2.3]{Ten} to truncate the RHS above to
\begin{align}\label{EPF}
\pi(x)\ =\ \frac{1}{2\pi i}\int_{\sigma-iT}^{\sigma+iT} Z(s)\frac{x^{2s}}{s}ds + O\left( \frac{x^{2\sigma}}{T} \right)
\end{align}
where $T$ is a positive, large parameter, which we will later choose depending on $x$. 

The function $\varphi_\frak{a}\equiv \varphi_\frak{aa}$ appearing in the Fourier expansion of the Eisenstein series is expressed in terms of $Z(s)$. In fact,
\begin{align*}
Z(s)\ =\ \frac{\pi^{-1/2}\Gamma(s)}{\Gamma(s-1/2)}\ \varphi_\frak{a}(s)
\end{align*}
whenever $\sigma>1$. This allows to meromorphically continue $Z(s)$ to the half-plane $\sigma>1/2$, where the poles are those of $\varphi_\frak{a}(s)$. That is, $Z(s)$ has possibly finitely many simple poles $1/2<\sigma_j<1$ and a simple pole at $s=1$. The residue of the latter is given by 
\begin{align*}
\underset{s=1}{\res}\ Z(s)\ =\  \frac{1}{\pi\vol(\Gamma\backslash\h)}.
\end{align*}
 Moreover, because $\varphi_\frak{a}(s)$ is bounded in the half-plane $\sigma>1/2$, $Z(s)$ may be there approximated directly from Stirling's formula. That is, explicitly,  
\begin{align*}
Z(s)\ \ll\ \pi^{-1/2}\abs{s-1/2}^{1/2}\ \ll \abs{t}^{1/2}.
\end{align*}
We apply the Residue Theorem to the rectangular path of integration with vertices $\frac{1}{2}+\eps\pm iT$, $1+\eps\pm iT$.
\begin{center}
\begin{tikzpicture}
\fill[color=gray!10] (2.5,-3)--(2.5,3)--(5,3)--(5,-3)--cycle;
\fill[thick,pattern=north west lines, pattern color=gray!25] (1.25,-3)--(1.25,3)--(2.5,3)--(2.5,-3)--cycle;
\draw[->] (-1,0) -- (5,0); 
\draw[->] (0,-3) -- (0,3);
\draw[blue] (2.55,-2.5)--(2.55,2.5);
\draw[gray] (2.5,-3)--(2.5,3);
\draw[blue] (1.3,-2.5)--(1.3,2.5);
\draw[gray] (1.25,-3)--(1.25,3);
\draw[blue] (2.55,-2.5)--(1.3,-2.5);
\draw[blue] (2.55,2.5)--(1.3,2.5);
\draw[fill,red] (1.6,0) circle[radius=0.05];
\draw[fill,red] (1.9,0) circle[radius=0.05];
\draw[fill,red] (2.3,0) circle[radius=0.05];
\draw[fill,red] (2.5,0) circle[radius=0.05];
\fill[gray!15] (6,2.6)--(6.5,2.6)--(6.5,2.4)--(6,2.4)--cycle;
\draw (6.7,2.5) node [right] {region of abs. convergence};
\fill[thick,pattern=north west lines, pattern color=gray!25] (6,2.2)--(6.5,2.2)--(6.5,2)--(6,2)--cycle;
\draw (6.7,2.1) node [right] {region of mero. continuation};
\draw (6.7,1.7) node [right] {with finitely many poles};
\fill [red] (6.25,1.3) circle[radius=0.05];
\draw (6.7,1.3) node [right] {$\sigma_j$ = location of simple pole};
\draw[blue] (6.15, 0.6)--(6.15,1.0)--(6.35,1.0)--(6.35,0.6)--cycle;
\draw (6.7,0.8) node [right] {contour of integration};
\end{tikzpicture}
\end{center}

The contribution on the left vertical segment is bounded by
$$
x^{1+\eps} \int^{T}_{-T} \abs{t}^{-1/2} dt\ =\ x^{1+\eps}T^{1/2},
$$
the contribution of the horizontal segments by
$$
T^{-1/2}\int^{1+\eps}_{1/2+\eps} x^{2\sigma}d\sigma\ \ll\ T^{-1/2}x^{2+\eps},
$$
for $x$ large enough, so that with (\ref{EPF}),
\begin{eqnarray*}
\pi(x)\ &=\ &  \sum_{1/2<\sigma_j\leq 1} \underset{s=\sigma_j}{\res}(Z(s))\frac{x^{2\sigma_j}}{\sigma_j} + O\left(x^{1+\eps}T^{1/2} + \frac{x^{2+\eps}}{T^{1/2}}+\frac{x^{2+\eps}}{T}\right) \\
& =\ & \frac{x^2}{\pi\vol(\Gamma\backslash\h)} + \sum_{1/2<\sigma_j <1} \underset{s=\sigma_j}{\res}(Z(s))\frac{x^{2\sigma_j}}{\sigma_j} + O\text{-term}.
\end{eqnarray*}
The error term is minimised by choosing $T=x$, yielding $O\left(x^{3/2+\eps}\right).$

We would like to conclude with some remarks on the (non-)optimality of the above estimate. The analytical proof presented here, while sufficient for the equidistribution problem at hand, is not well suited to obtain an optimal error term. To see this, take $\Gamma=\SL(2,\Z)$. Then the counting function $\pi(x)$ is precisely the partial sum
$$
\pi(x)\ =\ \sum_{n=1}^{\lfloor x\rfloor}\sum_{\substack{ a=1 \\ (a,n)=1}}^n 1\ =\ \sum_{n=1}^{\lfloor x\rfloor} \phi_\text{tot}(n)
$$
for the Euler totient function $\phi_\text{tot}$. On the other hand, we have
$$
Z(s)\ =\ \sum_{n\geq1}\frac{\phi_\text{tot}(n)}{n^{2s}}\ =\ \frac{\zeta(2s-1)}{\zeta(2s)},
$$
where $\zeta$ is the Riemann $\zeta$-function. Then $Z(s)$ has no poles in $[1/2,1)$. Upon assuming the Riemann Hypothesis, there are no poles in $(1/4,1)$ and we can improve our estimate to
$$
\pi(x)\ =\ \frac{3}{\pi^2}x^2 + O\left(x^{5/4+\eps}\right).
$$
However, by way of algebraic manipulations, we have the well-known elementary estimate
$$
\pi(x)\ =\ \frac{3}{\pi^2}x^2 + O(x\ln x).
$$
The error term here is already much stronger, and yet still far from optimal ;  Montgomery conjectured for the maximum order of magnitude of the remainder term
$$
R(x)\ =\ \pi(x) - \frac{3}{\pi^2}x^2
$$
that $R(x)\ll x\ln\ln x$ should hold \cite{Mont}.

\section{Proof of Theorem \ref{equidistribution mod 1}}

Unfolding the definition of the Dedekind symbol $\mathcal{S}_\frak{a}$ yields, for each $k\in\R_{>0}$,
$$
\sum_{0<c\leq x}\sum_* e\left(k\mathcal{S}_\frak{a}\left(\Gamma_\frak{a}\gamma\Gamma_\frak{a}\right)\right)\ =\ e\left(-\frac{k}{4}\right)\sum_{0<c\leq x}\sum_* \overline{\chi_{\frak{a},k}\bpm a&*\\c&d\epm } e\left( \frac{kV}{4\pi}\frac{a+d}{c}\right)
$$
where the summation symbol $\sum_*$ is again indexed on the double coset decomposition, 
$$
\bpm a&b\\c&d\epm\ =\ \sigma_\frak{a}^{-1}\gamma\sigma_\frak{a}
$$
and 
\begin{align*}
\chi_{\frak{a},k}(\cdot) = e^{2\pi ik\psi_\frak{a}(\cdot)}
\end{align*}
 defines a multiplier system of weight $k$ for $\sigma_\frak{a}^{-1}\Gamma\sigma_\frak{a}$, determined by the function $\psi_\frak{a}$ constructed in Section 3. 
 \begin{lm} 
 For the multiplier system $\chi_{\frak{a},k}$, 
$$
\alpha\left(\chi_{\frak{a},k}\right)\ =\ \left\lceil \frac{k\ \vol(\Gamma\backslash\h)}{4\pi}\right\rceil -\frac{k\ \vol(\Gamma\backslash\h)}{4\pi},
$$
where $\lceil x\rceil$ denotes the smallest integer $\geq x$. 
\end{lm}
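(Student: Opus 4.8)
The plan is to reduce the entire computation to a single value, namely $\psi_\frak{a}\bsm1&1\\&1\esm$, which has already been determined in the proof of Part (2) of Theorem \ref{Dedekind symbols}. Recall that $\alpha=\alpha(\chi_{\frak{a},k})$ is by definition the unique scalar in $[0,1)$ satisfying $\chi_{\frak{a},k}\bsm1&1\\&1\esm=e^{-2\pi i\alpha}$, and that $\chi_{\frak{a},k}(\cdot)=e^{2\pi ik\psi_\frak{a}(\cdot)}$. Hence the first, and essentially only, computational step is to evaluate the multiplier on the parabolic generator $\bsm1&1\\&1\esm$ of the stabilizer.

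First I would invoke the identity $\psi_\frak{a}\bsm1&m\\&1\esm=m\frac{V}{4\pi}$, with $V=\vol(\Gamma\backslash\h)$, established in Section 3, specialized to $m=1$. This gives immediately
$$
\chi_{\frak{a},k}\bpm1&1\\&1\epm\ =\ e^{2\pi ik\psi_\frak{a}(\begin{smallmatrix}1&1\\&1\end{smallmatrix})}\ =\ e^{2\pi i\frac{kV}{4\pi}}.
$$
Comparing with the defining relation $e^{-2\pi i\alpha}$, the problem collapses to choosing the correct representative: I must identify the unique $\alpha\in[0,1)$ with $\alpha\equiv-\frac{kV}{4\pi}\pmod 1$.

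Next I would record the elementary fact that for any real number $x$, the quantity $\lceil x\rceil-x$ lies in $[0,1)$ and satisfies $\lceil x\rceil-x\equiv-x\pmod 1$, since $\lceil x\rceil\in\Z$; it is therefore the unique representative of $-x$ in $[0,1)$. Applying this with $x=\frac{kV}{4\pi}$ yields the claimed formula. A brief case check confirms the edge behaviour: if $\frac{kV}{4\pi}\in\Z$ then $\alpha=0$, while otherwise $\alpha=1-\{\tfrac{kV}{4\pi}\}\in(0,1)$, both consistent with $\lceil\tfrac{kV}{4\pi}\rceil-\tfrac{kV}{4\pi}$.

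I expect no serious obstacle, since the analytic substance lives entirely in the evaluation of $\psi_\frak{a}\bsm1&1\\&1\esm$, which is already in hand from Part (2). The only point demanding care is the sign-and-interval convention: the defining relation uses $e^{-2\pi i\alpha}$ with $\alpha\in[0,1)$, so it is the ceiling rather than the floor that produces the correct normalization, and one must verify that $\lceil\cdot\rceil-(\cdot)$ indeed lands in $[0,1)$ and not in $(0,1]$. Note also that $\psi_\frak{a}$ is here evaluated directly on the fixed matrix $\bsm1&1\\&1\esm$, so the failure of $\psi_\frak{a}$ to descend modulo $\pm I$ plays no role.
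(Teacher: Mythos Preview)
Your proposal is correct and follows exactly the paper's approach: the paper's proof is the single line ``This follows from $\psi_\frak{a}\bsm1&1\\&1\esm = \vol(\Gamma\backslash\h)/4\pi$,'' and you have simply unpacked why that value, together with the definition of $\alpha$ and the elementary identity $\lceil x\rceil - x \equiv -x \pmod 1$ in $[0,1)$, yields the claimed formula.
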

\begin{proof} This follows from $\psi_\frak{a}\bsm1&1\\&1\esm = \vol(\Gamma\backslash\h)/4\pi$. \end{proof}

\begin{prop}\label{prop Vardi identity}
In the context of Dedekind symbols, Vardi's identity translates to
$$
\sum_{0<c\leq x}\sum_* e\left(k\cdot\mathcal{S}_\frak{a}\left([\gamma]\right)\right)\ =\ e\left(-\frac{k}{4}\right)\sum_{0<c\leq x} S\left(\left\lceil\frac{kV}{4\pi}\right\rceil,\left\lceil\frac{kV}{4\pi}\right\rceil\ ;c, \chi_{\frak{a},k}\right)
$$
where $k\in\R_{>0}$, $\mathcal{S}_\frak{a}$ is the Dedekind symbol for $\Gamma$ at its cusp at $\frak{a}$, and $S$ is the general Kloosterman sum for $\sigma_\frak{a}^{-1}\Gamma\sigma_\frak{a}$ with respect to the multiplier system $\chi_{\frak{a},k}$.
\end{prop}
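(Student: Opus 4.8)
The plan is to start from the display at the opening of this section, which already unfolds the left-hand side through the definition of $\mathcal{S}_\frak{a}$ together with the splitting $\phi_\frak{a}(\gamma) = \psi_\frak{a}(\gamma) + \tfrac{1}{4}\sign(c(-d))$, and then to recognize its right-hand side as a sum of generalized Kloosterman sums. Since every double coset entering the sum has $c>0$, we have $\sign(c(-d)) = \sign(c) = 1$, which is precisely what pulls out the uniform prefactor $e(-k/4)$ and leaves the inner sum
$$
\sum_* \overline{\chi_{\frak{a},k}\bpm a&*\\c&d\epm}\, e\left(\frac{kV}{4\pi}\frac{a+d}{c}\right),
$$
where $V=\vol(\Gamma\backslash\h)$ and $\sum_*$ runs over the double coset decomposition of $\sigma_\frak{a}^{-1}\Gamma\sigma_\frak{a}$ for fixed $c$. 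I would record here that $\overline{\chi_{\frak{a},k}} = e(-k\psi_\frak{a})$ since $\abs{\chi_{\frak{a},k}}=1$, which is what reconciles the conjugated multiplier in the Kloosterman sum with the $-\phi_\frak{a}$ appearing in the Dedekind symbol.

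Next I would compare this inner sum term by term with Selberg's generalized Kloosterman sum
$$
S(m,n;c,\chi_{\frak{a},k}) = \sum_* \overline{\chi_{\frak{a},k}\bpm a&*\\c&d\epm}\, e\left(\frac{(m-\alpha)a + (n-\alpha)d}{c}\right),
$$
with $\alpha = \alpha(\chi_{\frak{a},k})$, choosing $m,n$ so that the frequencies match. By the Lemma just established, $\alpha = \lceil kV/4\pi\rceil - kV/4\pi$; taking
$$
m = n = \left\lceil \frac{kV}{4\pi}\right\rceil
$$
(a positive integer, since $kV/4\pi>0$) gives $m-\alpha = n-\alpha = kV/4\pi$, so that $\frac{(m-\alpha)a+(n-\alpha)d}{c} = \frac{kV}{4\pi}\frac{a+d}{c}$. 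The two expressions then coincide character by character, and summing over $0<c\le x$ yields the claimed identity.

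The argument is essentially a matching of definitions once the Lemma is in hand, so there is no serious analytic obstacle. The only point demanding care is the role of the ceiling function: because the Fourier shift $\alpha$ is normalized to lie in $[0,1)$, the integer arguments $m,n$ of the Kloosterman sum are \emph{forced} to be $\lceil kV/4\pi\rceil$ rather than any other integer sharing the same fractional defect, and it is exactly this choice that makes $m-\alpha$ collapse to the clean frequency $kV/4\pi$. This is the analogue, at the level of a twisted multiplier system, of Vardi's passage from $12m\,s(a;c)$ to the diagonal Kloosterman sum $S(m,m;c)$, the integer $m$ being replaced by the ceiling of the geometric scale $kV/4\pi$.
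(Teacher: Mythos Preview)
Your proposal is correct and follows exactly the route the paper intends: the paper does not give a separate proof of the proposition, as it is meant to follow immediately from the unfolded display at the start of the section together with the Lemma computing $\alpha(\chi_{\frak{a},k})$, and your argument spells out precisely that matching of frequencies $m-\alpha=n-\alpha=kV/4\pi$ which identifies the inner sum with $S(\lceil kV/4\pi\rceil,\lceil kV/4\pi\rceil;c,\chi_{\frak{a},k})$.
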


We integrate by parts the Goldfeld--Sarnak estimate (\ref{GS estimate}) and obtain as a result
$$
\sum_{c\leq x}\sum_* e\left( k\cdot\mathcal{S}([\gamma])\right)\ =\ e\left(-\frac{k}{4}\right)\left(\sum_{j=1}^l \tau_j x^{1+\alpha_j} + O\left(x^{1+\beta/3+\eps}\right)\right)
$$
where $0<\alpha_j<1$ and $\beta\leq 1$. In particular, the exponent of $x$ is smaller than 2. Hence, by Theorem \ref{counting thm} and Weyl's equidistribution criterium, the sequence $\{k\mathcal{S}_\frak{a}(\cdot)\}$ becomes equidistributed mod 1 as $c\to\infty$. 

We can moreover obtain a quantitative rate of this equidistribution. For any subinterval $[a,b]\subset[0,1]$, set 
$$
R(x)\ =\ \frac{\#\{ \text{first }\pi(x)\text{ terms in }\{ k\mathcal{S}(\cdot)\}\cap[a,b]}{\pi(x)} - (b-a).
$$
Then, by the Erd\"os--Tur\'an inequality \cite{ET},
$$
R(x)\ \ll\ \frac{1}{M} +\left( \sum_{m=1}^M \frac{e^{-\pi i\frac{mk}{2}}}{m}\right)\left( \sum_{j=1}^l \widetilde{\tau}_j x^{\alpha_j -1} +O\left(x^{-2/3 +\eps}\right)\right)
$$
for any positive integer $M$.

%

\noindent \textsc{Department Mathematik, ETH Zentrum, 8092 Z\"urich}\\
{\em E-mail address: } claire.burrin@math.ethz.ch

\end{document}